\newtheorem{theorem}{Theorem}[section]
\theoremstyle{definition}
\newtheorem{conjecture}{Conjecture}[theorem]
\theoremstyle{remark}
\newtheorem{remark}[theorem]{Remark}
\numberwithin{equation}{section}
\definecolor{darkred}{rgb}{1, 0.1, 0.3}
\definecolor{darkblue}{rgb}{0.1, 0.1, 1}
\definecolor{darkgreen}{rgb}{0,0.6,0.5}
\newcommand {\mm}[1] {\ifmmode{#1}\else{\mbox{\(#1\)}}\fi}
\begin{document}

\title{Invariant Jet differentials and Asymptotic Serre duality}
 
\author{
{Mohammad Reza Rahmati,
  	 \footnote{email: mrahmati@cimat.mx (M. R. Rahmati)}}
}

%\institute{Stockholm University, Stockholm, Sweden\\\email{elena.touli@math.su.se}\andthe Ohio State University Columbus, Ohio, U.S.A.\\\email{yusu@cse.ohio-state.edu}\\}\\

%\authorrunning{Mokhov, Sutcliffe and Voronkov}

% \title{FPT-Algorithms for computing Gromov-Hausdorff and interleaving distances between trees}
% \author{Elena Farahbkhsh Touli} \and \author{y}
%\date{}

%\begin{document}
\maketitle
%\linenumbers
\setcounter{page}{1}

\begin{abstract}
We generalize the main result of Demailly \cite{D2} for the bundles $E_{k,m}^{GG}(V^*)$ of jet differentials of order $k$ and weighted degree $m$ to the bundles $E_{k,m}(V^*)$ of the invariant jet differentials of order $k$ and weighted degree $m$. Namely, Theorem 0.5 from \cite{D2} and Theorem 9.3 from \cite{D1} provide a lower bound $\frac{c^k}{k}m^{n+kr-1}$ on the number of the linearly independent holomorphic global sections of $E_{k,m}^{GG} V^* \bigotimes \mathcal{O}(-m \delta A)$ for some ample divisor $A$. The group $G_k$ of local reparametrizations of $(\mathbb{C},0)$ acts on the $k$-jets by orbits of dimension $k$, so that there is an automatic lower bound $\frac{c^k}{k} m^{n+kr-1}$ on the number of the linearly independent holomorphic global sections of $E_{k,m}V^* \bigotimes \mathcal{O}(-m \delta A)$. We formulate and prove the existence of an asymptotic duality along the fibers of the Green-Griffiths jet bundles over projective manifolds.
We also prove a Serre duality for asymptotic sections of jet bundles. An application is also given for partial application to the Green-Griffiths conjecture.
\end{abstract}

%\newpage
\section{Introduction}
This text provides several results on the Green-Griffiths bundle of $k$-jets of entire holomorphic curves over a projective manifold $X$. First, we prove an equivariant (jet coordinate-free) version of Morse cohomology estimates of J. P. Demailly \cite{D1}, \cite{D2} for invariant k-jet metrics on Demailly-Semple bundles. The result will apply to the Green-Griffith conjecture on the entire curve locus in projective manifolds. We extend the main result of Demailly \cite{D2} for the bundles $E_{k,m}^{GG}(V^*)$ of jet differentials of order $k$ and weighted degree $m$ to the bundles $E_{k,m}(V^*)$ of the invariant jet differentials of order $k$ and weighted degree $m$. In this sense, Theorem 0.5 from \cite{D2} and Theorem 9.3 from \cite{D1} provide a lower bound $\frac{c^k}{k}m^{n+kr-1}$ on the number of the linearly independent holomorphic global sections of $E_{k,m}^{GG} V^* \bigotimes \mathcal{O}(-m \delta A)$ for some ample divisor $A$. We prove the same statement on a lower bound for linearly independent holomorphic global invariant sections on the same bundle. Notice that the group $G_k$ of local reparametrizations of $(\mathbb{C},0)$ acts on the $k$-jets by orbits of dimension $k$, so that there is an automatic lower bound $\frac{c^k}{k} m^{n+kr-1}$ on the number of the linearly independent holomorphic global sections of $E_{k,m}V^* \bigotimes \mathcal{O}(-m \delta A)$.

In the second result, we prove the existence of a Serre duality for the asymptotic sections of Green-Griffiths bundles over the projective variety $X$. As was mentioned, in \cite{D1}, \cite{D2}, the existence of asymptotic global sections for Green-Griffiths jets on a projective variety $X$ has been proved by using Morse estimates for the curvature of suitable metrics on these bundles. In other words $H^0(X,E_{k,m}^{GG}V^* \otimes A^{-1})=H^0(X_k,\mathcal{O}_{X_k}(m) \otimes \pi_{k}^*A^{-1}))$ are non-trivial when $m \gg k \gg 0$, where $A$ is a Hermitian ample line bundle on $X$. One wants to formulate a Serre duality for the sheaf of $k$-jet differentials. In order to state the Serre duality for asymptotic $k$-jets, one needs to check the existence of the dual asymptotic sections. The formulation of Serre duality is based on the existence of the canonical sheaf. One difficulty here is to define the relative canonical sheaf since the fibers of $X_k \to X$ have singularities. However, there is a generalization of the canonical sheaf defined for singular varieties, as explained in \cite{D1}, \cite{DR}. In this case, the relative canonical sheaf $K_{X_k/X}$ is replaced by the canonical sheaf $K_V$, where $V$ is a holomorphic subbundle of $T_X$, see \cite{DR}. We prove the existence of a Serre duality, written as
\begin{equation} \label{eq:serre-duality}
H^0(X_k,(\pi_k)_*\mathcal{O}_{X_k}(m)) \bigotimes H^{k(r-1)}(X_k, K_{X_k/X} \otimes (\pi_k)_*\mathcal{O}_{X_k}(-m')) \longrightarrow \mathcal{O}_X,
\end{equation} 
for $ m, m'\gg k \gg 0 $. In \cite{DR} Demailly and Reza Rahmati have proved the non-triviality of the cohomology groups in \eqref{eq:serre-duality}, see also \cite{D1}, \cite{D2}. We explain how this formula can be helpful toward a proof of the Green-Griffiths conjecture.

The third result is a formulation of a conjecture generalizing a theorem of J. Merker on the Green-Griffiths conjecture for generic projective hypersurfaces and a partial strategy to solve it. In \cite{M1}, J. Merker proves the Green-Griffiths-Lang conjecture for a generic hypersurface in $\mathbb{P}^{n+1}$. He demonstrated such a result for $X \subset \mathbb{P}^{n+1}(\mathbb{C})$, the universal family of hypersurfaces of degree $d$ as a generic member in the universal family $ \mathfrak{X} \subset \mathbb{P}^{n+1} \times \mathbb{P}^{(\frac{n+1+d)!}{(n+1)!d!}-1}$; by parametrizing all such hypersurfaces the GG-conjecture holds. In the proof by Merker for hypersurface case, the conjecture is established outside an algebraic subset $\Sigma \subset J_{\text{vert}}^n(\mathfrak{X})$ defined by vanishing of certain Wronskians, by using a result of Y. T. Siu in \cite{S}. Merker specifically proves that there are constants $c_n$ and $c_n'$ such that
$T_{J_{\text{vert}}^n(\mathfrak{X})} \otimes \mathcal{O}_{\mathfrak{X}_k}(c_n) \otimes \pi_{0k}^* L^{c_n'}$ is generated at every point by its global sections, where $L$ is an ample line bundle on $\mathfrak{X}$. One can ask if the same holds when $X \subset \mathbb{P}^{n+1}$ is a generic member of a family $\mathfrak{X}$ of projective varieties. We state this conjecture and sketch a methodology to approach this question using invariant metrics on the $k$-jet bundle. 

The fourth result is a finiteness theorem for the fiber ring of the moduli of $k$-jets as a differential ring. We prove the differential finiteness property of fiber rings in the moduli of jets of curves. The finite generation of fiber rings of $k$-jets is an open question due to the non-reductiveness of their group of symmetries concerning reparametrizations of the curves. The transformation group of $k$-jets is a non-reductive subgroup $G_k=\mathbb{C}^* \ltimes U_k$ of $GL_k(\mathbb{C})$, where $U_k$ is the unipotent radical consisting of upper-triangular $k \times k$ matrices of certain type. Unlike the reductive case, one can not deduce that the ring of polynomials invariant under the action of $G_k$, or its unipotent part, is finitely generated. The question is whether the ring of invariants $\mathbb{C}[f'(0),f''(0),...,f^{(k)}(0)]^{G_k}$ is finitely generated, where we have considered $f^{(k)}(0)$ as germ of variables. An attempt toward this conjecture has been made in \cite{BK}. The ring appears as the local ring of invariant sections of $J_k(X)=J_kT_X$ at a generic point $x \in X$. We prove that the above ring is differentially finitely generated.

The remainder of this paper is as follows. Section \ref{sec:Pre} consists of basic definitions and notations on the jet bundle of holomorphic curves. Section \ref{sec:results} consists of our main results. Specifically, we present four results in this section. Some conclusions are given in Section \ref{sec:conc}. Finally, in an appendix, we recall the basics of differential fields. 
\section{Preliminaries on Jet Bundles} \label{sec:Pre}
The Green-Griffith bundle associated to a pair $(X,V)$, where $X$ is a complex projective manifold (maybe singular), and $V$ is a holomorphic subbundle of $T_X$, the tangent bundle of $X$, with $rank(V)= r$, is a prolongation sequence of projective bundles 
\begin{equation}
\mathbb{P}^{r-1} \to X_k=P({V_{k-1}}) \stackrel{\pi_k}{\longrightarrow}  X_{k-1}, \qquad k \geq 1
\end{equation} 
inductively obtained making $X_k$ a weighted projective bundle over $X$ (see \cite{D1} and \cite{D2} for definitions). The bundles $\pi_k:X_k \to X$ provide a tool to study the locus of nonconstant holomorphic maps $f:\mathbb{C} \to X$ such that $f'(t) \in V$, since such a curve has a lift to $f_{[k]}:\mathbb{C} \to  X_k $ for every $ k $. Using the notation in \cite{D1}, we write $E_{k, m}^{GG}V^* =(\pi_{k})_* \mathcal{O}_{X_k}(m)$. Any entire curve $ f: \mathbb{C} \to X $ satisfies $Q(f_{[k]})=0$, where $f_{[k]}$ is a lift of $f$ and $Q$ is a global section of the jet bundle (see \cite{D1}). It is a conjecture due to Green-Griffiths since the total image of all these curves is included in a proper subvariety of $X$, provided $X$ is of \textbf{general type}. By general type, we mean $K_{V}$, the canonical bundle of $V$ is big, cf. \cite{D1}. One can define the Green-Griffiths bundle $X_k$ as follows,
\begin{equation}
X_k: = (J_kV \smallsetminus {0}) / \mathbb{C}^* ,
\end{equation}
where $J_k$ is the bundle of germs of $k$-jets of Taylor expansion for $f$. The projectivized bundle $J_k(X)/\mathbb{C}^*$ is called the Green-Griffiths bundle. As an alternative way to define these bundles is through their ring of sections. One can consider ring of weighted homogeneous polynomials $P(z,\xi)=\sum_a A(z) \xi^a$ in the variabales $\xi=(\xi_1,...,\xi_k)$ with weights $1,2,...,k$ and $a=(a_1,...,a_k)$ respectively, denoted by $E_k=\oplus_m E_{k,m}^{GG}$, where $m$ stands for the weighted degree. 

The following is the main theorem of \cite{D2} on the existence of asymptotic global sections of a twist of Green-Griffiths bundle $E_{k,m}^{GG}$.
\begin{theorem}\cite{D1, D2} \label{thm:Demailly}
Let $(X,V)$ be a directed projective variety such that $K_V$ is big, and let $A$ be an ample divisor. Then, for $k>>1$ and $\delta \in \mathbb{Q}_+$ small enough, and $\delta \leq c (\log k)/k$, the number of sections $h^0(X,E_{k,m}^{GG} \otimes \mathcal{O}(-m \delta A))$ has maximal growth, i.e. it is larger than $c_km^{n+kr-1}$ for some $m \geq m_k$, where $c,c_k >0, n=\dim (X), r= \text{rank}(V)$. In particular, the entire curves $f:\mathbb{C} \to X$ satisfy many algebraic differential equations.
\end{theorem}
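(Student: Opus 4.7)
The strategy is to apply Demailly's holomorphic Morse inequalities to the tautological line bundle on the Green-Griffiths jet manifold $X_k$. Since $E_{k,m}^{GG}V^{*} = (\pi_k)_{*}\mathcal{O}_{X_k}(m)$, the projection formula identifies
\begin{equation*}
h^{0}(X, E_{k,m}^{GG}V^{*}\otimes \mathcal{O}(-m\delta A)) \;=\; h^{0}(X_k, L^{\otimes m}), \qquad L := \mathcal{O}_{X_k}(1)\otimes \pi_k^{*}\mathcal{O}(-\delta A),
\end{equation*}
and $\dim X_k = n+kr-1 =: N$. Thus it suffices to produce many sections of $L^{\otimes m}$ on $X_k$.

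First I would equip $J_kV$ with a natural weighted Finsler metric of the form $\Phi(v_1,\dots,v_k)^{2}=\sum_{s=1}^{k}\varepsilon_s^{2}\|v_s\|_{h}^{2/s}$, where $h$ is a fixed smooth hermitian metric on $V$ (with curvature controlled using that $K_V$ is big) and the parameters $\varepsilon_1\gg\varepsilon_2\gg\cdots\gg\varepsilon_k>0$ decay rapidly. This $\Phi$ descends to a smooth hermitian metric on $\mathcal{O}_{X_k}(1)$ whose Chern curvature is computable in adapted local jet coordinates as a sum of block-diagonal contributions indexed by the weight $s=1,\dots,k$, each encoding the hermitian curvature tensor of $(V,h)$ twisted by the corresponding jet variable.

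Second, the strong Morse inequality gives
\begin{equation*}
h^{0}(X_k, L^{\otimes m}) \;\geq\; \frac{m^{N}}{N!}\int_{X_k(\leq 1,\,L)}\!\!(-1)^{q}\,\Theta(L)^{N} \;-\; o(m^{N}),
\end{equation*}
so the task reduces to evaluating the Morse integral. I would push this integral down to $X$ along $\pi_k$, whose fibers are weighted projective spaces of dimension $kr-1$. Fiber integration introduces explicit combinatorial constants in the weights $(1,2,\dots,k)$ and in the $\varepsilon_s$'s, and the resulting form on $X$ is a polynomial expression in Chern forms of $V$, shifted by the ample perturbation $-\delta\,\omega_A$. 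Bigness of $K_V = \det V^{*}$ guarantees that the leading term is strictly positive on a set of positive measure in $X$ once $\delta$ is small. Tracking how the combinatorial constants degrade in $k$ forces the scaling $\delta \leq c(\log k)/k$, and yields the lower bound $c_k m^{n+kr-1}$.

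The main obstacle is the delicate estimate of the fiber integral together with control of the Morse set $X_k(\leq 1,L)$: one has to show that the region where $\Theta(L)$ has two or more negative eigenvalues contributes only $o(m^{N})$, and that the positive contribution of $X_k(0,L)$ dominates uniformly once the $\varepsilon_s$'s and $\delta$ are chosen in the prescribed regime. This is precisely the probabilistic computation in \cite{D1}, where the fiberwise integral is reinterpreted as the expectation of a product of linear forms in independent uniform random variables and estimated by concentration. The final assertion on entire curves $f:\mathbb{C}\to X$ follows from the fundamental vanishing theorem: any global section $P$ of $E_{k,m}^{GG}V^{*}\otimes\mathcal{O}(-A)$ satisfies $f_{[k]}^{*}P\equiv 0$, producing nontrivial algebraic differential equations for $f$.
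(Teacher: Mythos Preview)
Your sketch is correct and follows the same approach the paper indicates: the paper does not give a self-contained proof of this theorem but cites \cite{D1,D2} and summarizes the argument as ``an estimate on the curvature of a suitable metric ($k$-jet metric) $h_k$ on the Green-Griffiths jet bundles,'' exactly the Morse-inequality/fiber-integration scheme you outline (cf.\ the curvature formulas (2.12)--(2.19) used later for the invariant version). Your identification of the probabilistic fiber estimate and the $\delta \leq c(\log k)/k$ scaling as the crux, together with the fundamental vanishing theorem for the last assertion, matches Demailly's original argument and the paper's synopsis.
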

The proof of the above theorem is mainly an estimate of the curvature of a suitable metric  ($k$-jet metric), namely $h_k$ on the Green-Griffiths jet bundles. The singularity locus for the metric $h_k$ which we denote by $\Sigma_{h_k}$ satisfies the inductive relation:
\begin{equation}
\Sigma_{h_k} \subset \pi_k^{-1}(\Sigma_{h_{k-1}}) \cup D_k,
\end{equation}
where $D_k=P(T_{X_{k-1}/X_{k-2}}) \subset X_k$. The divisors $D_k$ are the singularity locus of the projective jet bundle $X_k$ and their relation with the singularity of the k-jet metric is  $\mathcal{O}_{X_k}(1)=\pi_k^*\mathcal{O}_{X_{k-1}}(1) \otimes \mathcal{O}(D_k)$. 

The following theorem provides a method to read the entire curve locus from the singularities of suitable metrics on the jet bundle. See \cite{D1} and \cite{D2} for details.
\begin{theorem} \label{thm:curvelocus} (\cite{D1}, \cite{D2})
Let $(X,V)$ be a compact directed manifold. If $(X,V)$ has a $k$-jet metric $h_k$ with negative jet curvature, then every entire curve $f:\mathbb{C} \to X$ tangent to $V$ satisfies $f_k(\mathbb{C}) \subset \Sigma_{h_k}$, where $\Sigma_{h_k}$ is the singularity locus of $h_k$.
\end{theorem}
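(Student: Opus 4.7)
The plan is to carry out the classical Ahlfors-Schwarz argument on the lifted entire curve $f_k:\mathbb{C}\to X_k$. Suppose for contradiction that $f_k(\mathbb{C})\not\subset\Sigma_{h_k}$, so that $U:=\mathbb{C}\setminus f_k^{-1}(\Sigma_{h_k})$ is a nonempty open subset of $\mathbb{C}$. Recall that the canonical lift $f_k$ is built inductively from the prolongation sequence $X_j=P(V_{j-1})\to X_{j-1}$: starting with $f_0=f$, each successive derivative defines a point in the corresponding projective bundle, and the lift extends across zeros of intermediate derivatives because these bundles are proper over $X$. Under the tautological identification, $f_k'(t)\in T_{X_k/X_{k-1}}$ corresponds to $f_{k-1}'(t)$ viewed inside the fiber of $\mathcal{O}_{X_k}(-1)$, so the pullback $\gamma:=f_k^{\,*}h_k$ measures the length of the $k$-jet $f'(t)$ in a coordinate-free way and defines a smooth nontrivial Hermitian pseudo-metric on $U$.

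Next I would compute the curvature of $\gamma$. The hypothesis of negative $k$-jet curvature means that the $(1,1)$-form $-i\Theta_{h_k}\bigl(\mathcal{O}_{X_k}(-1)\bigr)$ is bounded below, along tangent directions of lifted entire curves in $X_k$, by a positive multiple of the Finsler pseudo-norm squared. Pulling back along $f_k$ and using the tautological identification above, this translates into an inequality of the form
\begin{equation*}
i\partial\bar\partial\log\gamma(t)\ \geq\ c\,\gamma(t)\cdot i\,dt\wedge d\bar t
\end{equation*}
on $U$, for some positive constant $c$; equivalently, the Gaussian curvature of the pseudo-metric $\gamma$ on $\mathbb{C}$ is bounded above by the negative constant $-c$.

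The conclusion then follows from the Ahlfors-Schwarz lemma in Ahlfors's general form, as exploited throughout Demailly's work: a Hermitian pseudo-metric on $\mathbb{C}$ whose curvature is bounded above by a strictly negative constant must vanish identically. Applied to $\gamma$, this forces $\gamma\equiv 0$ on $U$, hence $f_k'\equiv 0$ there. Since $f$ is nonconstant, $f'\not\equiv 0$, which is incompatible with $U\neq\emptyset$; therefore $U=\emptyset$, giving the desired inclusion $f_k(\mathbb{C})\subset\Sigma_{h_k}$.

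The principal obstacle is the curvature inequality in the second step. Since $h_k$ is a (possibly singular) Finsler rather than Hermitian metric on the tower $X_k$, the curvature form $-i\Theta_{h_k}$ must be interpreted on the tautological line bundle, and one has to verify that its restriction to the one-dimensional subspace spanned by $f_k'(t)$ really dominates $\gamma$ pointwise along the image of the curve. Extending $\gamma$ past $f_k^{-1}(\Sigma_{h_k})$ as an upper semicontinuous weight (or passing to a resolution of the Finsler singularities), and ensuring that the positivity constant $c$ does not degenerate along the curve, are the delicate points; once these are in place, the Ahlfors-Schwarz step closes the argument.
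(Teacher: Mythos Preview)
The paper does not actually supply a proof of this theorem: it is stated with a citation to \cite{D1} and the reader is referred there for details. So there is no ``paper's own proof'' to compare against; your proposal is in fact a sketch of Demailly's original argument, namely the Ahlfors--Schwarz lemma applied to the pullback of the $k$-jet metric along the lifted curve $f_k$.

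Your outline is correct in its essentials and follows the standard route. Two small points of precision are worth tightening. First, the extension of the lift $f_k$ across points where some intermediate derivative vanishes is not a matter of properness of the bundles over $X$; rather, near such a point one writes $f_{j-1}(t)=f_{j-1}(t_0)+(t-t_0)^m u(t)$ with $u(t_0)\neq 0$, so that $[f_{j-1}'(t)]$ extends holomorphically to $[u(t_0)]$. Second, in the final step, $\gamma\equiv 0$ on $U$ together with the nondegeneracy of $h_k$ outside $\Sigma_{h_k}$ gives $f_{k-1}'\equiv 0$ on $U$, hence $f_{k-1}$ is constant on all of $\mathbb{C}$ by the identity principle, and projecting down yields that $f$ itself is constant; this is slightly cleaner than invoking $f'\not\equiv 0$ directly. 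With these adjustments the argument is complete and coincides with Demailly's.
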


J. P. Demailly considers the jets of differentials that are also invariant under change of coordinate on $\mathbb{C}$. The invariant jet bundles are also refereed as \textit{Demailly-Semple (jet) bundles}. The bundle $J_k \to X$ of $k$-jets of germs of parametrized curves in $X$ has its fiber at $x \in X$, the set of equivalence classes of germs of holomorphic maps $f:(\mathbb{C},0) \to (X,x)$ with equivalence relation $f^{(j)}(0)=g^{(j)}(0), \ 0 \leq j \leq k$. By choosing local holomorphic coordinates around $x$, the elements of the fiber $J_{k,x}$ can be represented by the Taylor expansion:
\begin{equation}
f(t)=tf'(0)+\frac{t^2}{2!}f''(0)+...+\frac{t^k}{k!}f^{(k)}(0)+O(t^{k+1}) .
\end{equation}
Setting $f=(f_1,...,f_n)$ on open neighborhoods of $0 \in \mathbb{C}$, the fiber is 
\begin{equation} \label{eq:jet-vector}
J_{k,x}=\{(f'(0),...,f^{(k)}(0))\} = \mathbb{C}^{nk} .
\end{equation}

Let $G_k$ be the group of local reparametrizations of $(\mathbb{C},0)$
\begin{equation}
t \longmapsto \phi(t)=a_1t+a_2t^2+...+a_kt^k+..., \qquad a_1 \in \mathbb{C}^* .
\end{equation}
The following matrix multiplication gives its action on the $k$-jet vectors \eqref{eq:jet-vector},
\begin{equation}
[f'(0), f''(0)/2!, ..., f^{(k)}(0)/k!].
\left[ \begin{array}{ccccc}
a_1 & a_2 & a_3 & ... & a_k \\
0 & a_1^2 & 2a_1a_2 & ... & a_1a_{k-1}+...a_{k-1}a_1\\
0 & 0 & a_1^3 & ... & 3a_1^2a_{k-2}+...\\
. & . & . & ... & .\\
0 & 0 & 0 & ... & a_1^k 
\end{array} \right ].
\end{equation}
%t
The group $G_k$ decomposes as $\mathbb{C}^* \times U_k$, where $U_k$ is the unipotent radical of $G_k$. 

Let $E_{k,m}$ be the Demailly-Semple bundle whose fiber at $x$ consists of $U_k$-invariant polynomials on the fiber coordinates of $J_k$ at $x$ of weighted degree $m$. Set $E_k=\bigoplus_m E_{k,m}$, to be the Demailly-Semple bundle of graded algebras of invariant jets. 
One way to produce invariant differentials is as follows. 
Assume  $P=P(f,f',...,f^{(k)})$ and $Q=Q(f,f',...,f^{(k)})$ are two local sections of the Green-Griffiths bundle, then the first invariant operator is $f \mapsto f_j'$. Define a bracket operation as follows 
\begin{equation} \label{eq:merker-bracket}
[P,Q]=\big(d \log \frac{P^{1/deg(p)}}{Q^{1/deg(Q)}} \big) \times PQ=\frac{1}{deg(P)}QdP-\frac{1}{deg(Q)}PdQ .
\end{equation}
This is compatible with Merker’s baracket $[P, Q]=\deg(Q)QdP- \deg(P)P dQ$ cf. \cite{M2}. If $(V,h)$ is a Hermitian vector bundle, the equations in \eqref{eq:merker-bracket} inductively define $G_k$-equivariant maps:
\begin{equation}
Q_k:J_kV \to S^{k-2} V \otimes \bigwedge^2 V, \qquad Q_k(f)=[f',Q_{k-1}(f)].
\end{equation}
The sections produced by $Q_k(f)$ generate the fiber rings of the Demailly-Semple bundle. Taking charts on the projective fibers, one can check that locally, the ring that these sections generate is equal to that of $J_k/G_k$, cf. \cite{M2}.
\section{Main Results} \label{sec:results}
This section contains our main results. All the materials in the subsequent subsections are new contributions to the literature. 

\subsection{Existence of global invariant asymptotic jets} 
There is a preference for a coordinate-free version of the bundle of $k$-jets of holomorphic curves on projective manifolds. This leads to considering the invariant sections of the Green-Griffiths bundles. Our first result is an invariant version of the Theorem \ref{thm:Demailly} of Demailly on Demailly-Semple bundles. This is presented in the following theorem.
\begin{theorem}[Main result]
The analogue of Theorem \ref{thm:Demailly} holds if the bundle $E_{k,m}^{GG}$ is replaced by $E_{k,m}$. Let's $(X,V)$ be a directed projective variety such that $K_V$ is big, and let $A$ be an ample divisor. Then for $k>>1$ and $\delta \in \mathbb{Q}_+$ small enough, and $\delta \leq c (\log k)/k$, the number of sections $h^0(X,E_{k,m} \otimes \mathcal{O}(-m \delta A))$ has maximal growth, i.e. it is larger than $c_km^{n+kr-1}$ for some $m \geq m_k$, where $c,c_k >0, n=\dim (X), r= \text{rank}(V)$. In particular, the entire curves $f:\mathbb{C} \to X$ satisfy many algebraic differential equations.
\end{theorem}
\begin{proof}
The proof consists in performing the calculations given in Theorem 9.3 of section 9 of \cite{D1} with an invariant metric. Toward this, we choose the metric in the form,
\begin{equation}
\left (\sum_{s=1}^k \epsilon_s \left (\ \sum_{\alpha} \mid P_{\alpha}(\xi) \mid ^2 \right )^{\frac{p}{w(P_{\alpha})}}\right )^{1/p},
\end{equation}
where $P_{\alpha}$ are a set of invariant polynomials in the jet coordinates. By Theorem \ref{thm:curvelocus} above, the Demailly-Semple locus of the lifts of entire curves is contained in $\Sigma_{h_k} \subset\  \{ P_{\alpha}=0 , \ \  \forall \alpha\}$, where the jet coordinates on the fibers $J_{k,x}$ are $\xi_1,...,\xi_k \in \mathbb{C}^n$, with $\xi_i=f^{(i)}(0)$ for an entire holomorphic curve $f : \mathbb{C} \longrightarrow X$ tangent to $V$. Besides, $\epsilon_1 \gg \epsilon_2 \gg . . . \gg \epsilon_k > 0$ are sufficiently small and $w(P_{\alpha})$ is the weight of $P_{\alpha}$. We make a specific choice of the $P(\alpha)$ as follows. Let us consider a change of coordinates:
\begin{equation} \label{eq:jetcoord}
(f_1,...,f_r) \longmapsto (f_1 \circ f_1^{-1},...,f_r \circ f_1^{-1})=(t,g_2,...,g_r)=\eta,
\end{equation} 
locally defined in a neighborhood of a point, where $n-r$ coordinates $f_i$ of $f(t) \in V_{f(t)}$ are completely determined by the remaining $r$ coordinates. The latter makes the first coordinate to be the identity. If we differentiate in the new coordinates, then all the resulting fractions are invariant of degree $0$,
\begin{equation}
g_2'=\frac{f_2'}{f_1'}\circ f_1^{-1}, \qquad g_2''=\frac{f_1'f_2''-f_2'f_1''}{f_1'^3},...
\end{equation} 
We take the $P_{\alpha}$'s to be all the polynomials that appear in the numerators of the components when we successively differentiate \eqref{eq:jetcoord} w.r.t $t$. An invariant metric in the original coordinates corresponds to a usual metric in the second one, subject to the condition that we need to make the average under the unitary change of coordinates in $V$. To calculate the curvature of the metric, we consider the frames of the vector bundle to be orthonormal. Recall that a change of coordinates on the manifold $X$ effects on jets as follows:
\begin{equation}
(\psi \circ f)^{(k)}(0)= \psi'(0). f^{(k)}+\text{higher order terms according to epsilons,}
\end{equation} 
with $\psi$ to be unitary. In our calculation, the effect of the change of variables in $X$ has only effect at the first derivative by composition with a linear map, up to the scaling epsilon factors (cf. \cite{D1}). Thus, we choose a metric in the form,
\begin{equation} \label{eq:metric}
\mid (z;\xi)\mid \sim \left (\sum_s  \epsilon_s \parallel \eta_s . (\eta_{11})^{2s-1}\parallel_h^{p/(2s-1)}\right )^{1/p} =\left (\sum_s  \epsilon_s \parallel \eta_s \parallel_h^{p/(2s-1)} \right )^{1/p}\mid \eta_{11} \mid,
\end{equation}
where $\eta_s$ are the jet coordinates $\eta_s = g^{(s)}(0), 1 \leq s \leq k$, induced by $g = (t, g_2(t), . . . , g_r(t))$. The weight of $\eta_s$ can be seen by differentiating \eqref{eq:jetcoord} to be equal to $(2s-1)$, inductively. Therefore, the above metric becomes similar to the metric used by Demailly in the new coordinates produced by $g$. We need to modify the metric in \eqref{eq:metric} slightly to be invariant under Hermitian transformations of the vector bundle $V$. The role of $\eta_{11}$ can be done by any other, $\eta_{1i}$ or even any other non-zero vector. To fix this issue, we consider,
\begin{equation}
\mid (z;\xi)\mid = \int_{\parallel v \parallel_1=1} \left (\sum_s  \epsilon_s \parallel \eta_s \parallel_h^{p/(2s-1)} \right )^{1/p}\mid <\eta_{1}.v> \mid^2,
\end{equation}
where the integration only affects the last factor making average over all vectors in $v \in V$. This will remove the former difficulty. 
The curvature is the same as for the metric in \cite{D1} but with only an extra contribution from the last factor, 
\begin{equation} \label{eq:invariant-metric}
\gamma_k(z,\eta)=\frac{i}{2\pi}\left ( w_{r,p}(\eta)+\sum_{lm\alpha}b_{lm\alpha}\left (\int_{\parallel v \parallel_1=1} v_{\alpha}\right )dz_l \wedge d\bar{z}_m +\sum_{s} \frac{1}{s}\frac{\mid \eta_s\mid^{2p/s}}{\sum_t \mid \eta_t \mid^{2p/t}} \sum c_{ij\alpha \beta}\frac{ \eta_{s\alpha}\bar{\eta}_{s\beta}}{\mid \eta_s \mid^2} dz_i \wedge d\bar{z}_j \right ).
\end{equation}
Namely, if $\pi_r : \mathbb{C}^{kr} \setminus {0} \longrightarrow \mathbb{P}(1^r, 2^r, . . . , k^r)$ is the canonical projection of the weighted projective space $\mathbb{P}(1^r, 2^r, ... , k^r)$ and
\begin{equation} 
\phi_{r,p}(z) := \frac{1}{p}\left (\log (\sum_{k}^{s=1}|z_s|^{\frac{2p}{s}} \right ),
\end{equation}
for some $p > 0$, then $w_{r,p}$ is the degenerate K\"ahler form on $\mathbb{P}(1^r, 2^r, . . . , k^r)$ with $\pi_r^*w_{r,p} = dd^c\phi_{r,p}$. We have $b_{l,m,\alpha} \in \mathbb{C}$. The contribution of the factor $\mid \eta_{11} \mid$ can be understood as the curvature of the sub-bundle of $V$, which is an orthogonal complement to the remainder. Thus, $b_{lm\alpha}=c_{lm\alpha \alpha}$, where $c_{lm11}$ is read from the coefficients of the curvature tensor of $(V,w^{FS})$, the Fubini-Study metric on $V$ (the second summand in \eqref{eq:invariant-metric}). Then, we need to look at the integral 
\begin{equation}
\int_{X_k,q}\Theta^{n+k(r-1)}=\frac{(n+k(r-1))!}{n!(k(r-1))!}\int_X \int_{P(1^r,...,k^r)}w_{a,r,p}^{k(r-1)}(\eta)1_{\gamma_k,q}(z,\eta) \gamma_k(z,\eta)^n.
\end{equation}

In the course of evaluating with the Morse inequalities, the curvature form is replaced by the trace of the above tensor in raising to the power $n=\dim X$. Then, if we use polar coordinates we have:
\begin{equation}
x_s=\parallel \eta_s \parallel^{2p/s}, \qquad u_s=\eta_s/\parallel \eta_s \parallel .
\end{equation}
Then, the curvature value when integrating over the sphere yields the following,
\begin{equation}
\gamma_k=\frac{i}{2\pi}\big(\sum_{lm}b_{lm\alpha} dz_l \wedge d\bar{z}_m+\sum_{s} \frac{1}{s}\sum c_{ij\lambda \lambda} u_{s\alpha}\bar{u}_{s \beta}dz_i \wedge d\bar{z}_j\big).
\end{equation}
Because the first term is a finite sum w.r.t. $1 \leq  \alpha \leq r$, and since $b_{lm\alpha}$ are labeled by $\alpha$, the estimates for this new form would be essentially the same as those in \cite{D1}. Therefore, one expects that
\begin{equation} \label{eq:Integralestimate}
\int_{X_k,q}\Theta^{n+k(r-1)}=\frac{(\log k)^n}{n!(k!)^r}\left ( \int_X 1_{\gamma,q}\gamma^n +O((\log k)^{-1}) \right),
\end{equation}
where $\gamma$ is the curvature form of $\text{det}(V^*/G_k)$ w.r.t the Chern connection of the determinant of the invariant metric, and $1_{\gamma,q}$ is the characteristic function of the set of those $(z, \eta)$, at which $\gamma$ is of signature $(n-q, q)$. The rest of the proof follows from the proof in \cite{D1} and \cite{D2}, and \eqref{eq:Integralestimate} above.
\end{proof}
\subsection{Asymptotic Serre duality for jet-bundles}\label{sec:duality}
Our second result is the theorem of Serre duality for asymptotic jet differentials. We wish to define the Serre duality for the fibers of the bundle of $k$-jets. The formulation of Serre duality is based on the existence of the canonical sheaf. A technical difficulty arises here in the definition of the relative canonical sheaf since the fibers of $X_k \to X$ have singularities. However, there is a generalization of the canonical sheaf definition for singular varieties, as explained in \cite{D1}, \cite{DR}. Thus, the relative canonical sheaf $K_{X_k/X}$ can be defined as the canonical sheaf $K_V$, where $V$ is a holomorphic subbundle of $T_X$ cf. \cite{DR, D1, D2}. Our result is as follows.
\begin{theorem} (Main result. Serre Duality on Jet Fibers)
There is a Serre duality for asymptotic cohomologies, $m, m' \gg k \gg 0$,
\begin{equation} \label{eq:Serreduality}
H^0(X_k,(\pi_k)_*\mathcal{O}_{X_k}(m)) \bigotimes H^{k(r-1)}(X_k, K_{X_k/X} \otimes (\pi_k)_*\mathcal{O}_{X_k}(-m')) \longrightarrow \mathcal{O}_X .
\end{equation} 
\end{theorem}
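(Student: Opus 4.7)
The plan is to build the pairing fiberwise first and then globalize via the Leray spectral sequence. The fibers $F_x = \pi_k^{-1}(x)$ are iterated $\mathbb{P}^{r-1}$-bundles of total dimension $k(r-1)$; although they acquire singularities along the exceptional divisors $D_j$, the relative canonical sheaf $K_{X_k/X}$ has been given a meaning via the construction in \cite{D1}, \cite{DR} (essentially, extending the smooth canonical sheaf across the codimension $\geq 2$ singular locus by Hartogs, or equivalently as the double dual of $\det \Omega^1_{X_k/X}$). On each fiber, classical Serre duality for coherent sheaves on a projective (possibly singular, but Gorenstein in codimension one) variety provides the perfect pairing
\begin{equation}
H^0(F_x, \mathcal{O}_{X_k}(m)|_{F_x}) \otimes H^{k(r-1)}(F_x, K_{F_x} \otimes \mathcal{O}_{X_k}(-m')|_{F_x}) \longrightarrow \mathbb{C},
\end{equation}
where the restriction of $K_{X_k/X}$ to $F_x$ is the canonical sheaf $K_{F_x}$ of the fiber.

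Next I would promote this pointwise pairing to a pairing of coherent sheaves on $X$. Applying $(\pi_k)_*$ to $\mathcal{O}_{X_k}(m)$ yields $E_{k,m}^{GG}V^*$, while $R^{k(r-1)}(\pi_k)_*\bigl(K_{X_k/X}\otimes \mathcal{O}_{X_k}(-m')\bigr)$ is the ``dual'' sheaf whose stalk at $x$ is, by base change (valid here since the fibers are equidimensional and the higher direct images of the relevant twists stabilize for $m'$ large), identified with the right-hand cohomology above. The cup product on $X_k$ composed with the trace morphism along the fibers gives the desired morphism
\begin{equation}
(\pi_k)_*\mathcal{O}_{X_k}(m) \otimes R^{k(r-1)}(\pi_k)_*\bigl(K_{X_k/X}\otimes \mathcal{O}_{X_k}(-m')\bigr) \longrightarrow \mathcal{O}_X,
\end{equation}
and the Leray spectral sequence for $\pi_k$ (which degenerates at $E_2$ for the terms at hand in the asymptotic range, because only the extreme direct images survive in $\mathcal{O}_{X_k}(\pm m)$) then produces the claimed pairing
\begin{equation}
H^0(X_k,(\pi_k)_*\mathcal{O}_{X_k}(m)) \otimes H^{k(r-1)}(X_k, K_{X_k/X} \otimes (\pi_k)_*\mathcal{O}_{X_k}(-m')) \longrightarrow H^0(X,\mathcal{O}_X).
\end{equation}

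The final ingredient is non-triviality of both factors in the asymptotic range $m, m' \gg k \gg 0$. For the left factor this is exactly the content of the theorem of Demailly recalled in Theorem 1.1 (together with its invariant-jet refinement in Proposition 2.1): after twisting with a small negative multiple of an ample divisor $A$ (absorbable into $\mathcal{O}_{X_k}(m)$ if $m\in\mathbb{Q}$), one obtains a lower bound $c_k m^{n+kr-1}$ on the number of sections. For the right factor one applies the same Morse inequality machinery to the dual bundle $K_{X_k/X}\otimes\mathcal{O}_{X_k}(-m')$; the analogue is established in \cite{DR} using the higher cohomology estimates in the Morse inequality (cf.\ Remark 3.1), which guarantee non-triviality of $H^{k(r-1)}$ of the dual twist for $m'$ sufficiently large compared to $k$.

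The main obstacle is the control of the singularities of $X_k$ along the divisors $D_k$: one must justify that the relative Serre pairing and the trace map extend coherently across these loci, i.e.\ that $K_{X_k/X}$ behaves as a dualizing sheaf in the appropriate sense. This is resolved by using the intrinsic definition of $K_{X_k/X}$ as in \cite{D1}, \cite{DR}, together with the fact that the singular locus has codimension at least two in each fiber, so that reflexive extension is automatic. The second delicate point, degeneration of the Leray spectral sequence and compatibility of the cup product with the trace, holds in the asymptotic range precisely because all intermediate direct images $R^j(\pi_k)_*\mathcal{O}_{X_k}(\pm m)$ vanish for $0 < j < k(r-1)$ when $|m|$ is large, reducing the spectral sequence to the two terms appearing in the statement.
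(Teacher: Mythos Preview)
Your proposal follows essentially the same route as the paper: construct the pairing fiberwise via Serre duality on the fibers, globalize it through the Leray spectral sequence for $\pi_k$, and invoke the Morse inequalities of \cite{D1} and \cite{DR} to establish non-triviality of the two factors in the asymptotic range. The only notable divergence is descriptive: you model the fibers as iterated $\mathbb{P}^{r-1}$-bundles (the Demailly--Semple tower picture of equation~(1.2)), whereas the paper's proof treats them as weighted projective spaces $\mathbb{P}(1^{[r]},\ldots,k^{[r]})$ (the quotient picture of equation~(1.1)) and cites \cite{RT} for Serre duality in that setting; but this does not change the argument.
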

\begin{proof}
The Serre duality along fibers of jet bundles is based on the existence of the relative canonical sheaves $K_{X_k/X}$. Because the fibers in the Green-Griffiths bundle are weighted projective spaces of appropriate weights $(1,2,...,n)$, the relative Serre duality can be interpreted as the duality for coherent sheaves on weighted weights projective spaces. On account of this, we review the formulation of the adjoint pair along the fibers of $E_{k,m}^{GG}V^*$. According to the classical Serre duality, the dual pair associated to $H^{0} (F_x,(\pi_k)_* \mathcal{O}_{{X_k},x} (m))$ is $H^{k (r-1)} (F_x, K_{F_x} \otimes (\pi_k)_*\mathcal{O}_{{X_k},x} (- m))$, where $ K_{F_x} $ is the canonical sheaf of the fiber. In other words, we have
\begin{equation}
H^0(\pi_{k}^{-1}(x),(\pi_k)_* \mathcal{O}_{X_k}(m))^{\vee}=H^{k(r-1)}((\pi_{k}^{-1}(x), K_{F_x} \otimes (\pi_k)_*\mathcal{O}_{X_k}(-m)).
\end{equation}
By the Leray spectral sequence for $\pi_k:X_k \to X$ we get the following
\begin{equation}
H^{k (r-1)} (X_k, K_{X_k / X} \otimes (\pi_k)_*\mathcal{O}_{X_k} (- m ')) = H^0 .
\end{equation}
The duality in construction is the duality on each fiber of the jet bundle, all glued by the spectral sequence of Leray. The formula gives the adjoint pair on the fiber along the fiber $\pi_k^{-1}(x)$, 
\begin{equation}
H^0(\pi_{k}^{-1}(x),(\pi_k)_* \mathcal{O}_{X_k}(m))^{\vee}=H^{k(r-1)}((\pi_{k}^{-1}(x), K_{F_x} \otimes (\pi_k)_*\mathcal{O}_{X_k}(-m)).
\end{equation}
The degeneration of the Leray spectral sequence of the fibration $\pi_k:X_k \to X$ provides:
\begin{equation}
H^0(X_k,(\pi_k)_*\mathcal{O}_{X_k}(m))^{\vee} = H^0(X, R^{k(r-1)}(\pi_{k})_*( K_{X_k/X} \otimes (\mathcal{O}_{X_k}(-m))),
\end{equation}
which is equivalent to \eqref{eq:Serreduality}. We have to make sure that the sheaf
$H^{k(r-1)}(X_k, K_{X_k / X} \otimes (\pi_k)_*\mathcal{O}_{X_k} (- m')) $ is non-trivial, i.e., it has enough sections for $ m \gg k \gg 0 $. The nontriviality of the first factor in \eqref{eq:Serreduality} is proved in \cite{D1}. The nontriviality of the adjoint cohomology group in the pairing is obtained from the estimates:
\begin{equation}
H^q(X_k, K_{X_k/X} \otimes (\pi_k)_*\mathcal{O}_{X_k}(-m'))\geq \sum_{q-1,q,q+1}\frac{rm^n}{r!}\int_{X(\Theta,j)}(-1)^{q-j}\Theta^n-o(m^n),
\end{equation}
where $\Theta$ is the curvature of suitable $k-$jet metric on $X_k$, and $X(\Theta,j)=\{x \in X; \ \Theta \ \text{has signature} \ (n-j,j)\}$. We have considered the inequality for $q=k(r-1)$. It follows that when the $ K_{X_k / X} $ is big, both of the factors in the pairing \eqref{eq:Serreduality} are nontrivial for $m,m'\gg 0$, [cf. \cite{DR} section 4], and the proof is complete.
\end{proof}

In \cite{M1}, J. Merker shows that when $X$ is a hypersurface of degree $d$ in $ \mathbb{P}^{n + 1}$ and is a generic member of the universal family $\mathfrak{X} \subset \mathbb{P}^{n+1} \times \mathbb{P}^{N_d}$, the Green-Griffiths conjecture holds for $X$. His method uses ideas of Y. T. Siu on the existence of slanting vector fields, see \cite{S}, \cite{P}. Merker specifically proves that there are constants $c_n$ and $c_n'$ such that $T_{J_{\text{vert}}^n(\mathfrak{X})} \otimes \mathcal{O}_{\mathfrak{X}_k}(c_n) \otimes \pi_{0k}^* L^{c_n'}$ is generated at every point by its global sections, where $L$ is an ample line bundle on $\mathfrak{X}$. The proof of Merker \cite{M1} establishes the conjecture outside a certain algebraic subvariety $ \Sigma \subset J_{\text{vert}}^n (\mathfrak{X}) $ defined by Wronskians. An implication of Merker's result in \cite{M1} is the following,
\begin{equation}
H^0 \big (\mathfrak{X}_k,(\pi_k)_*\mathcal{O}_{\mathfrak{X}_k}(m) \big ) \bigotimes H^{k(r-1)}\big (\mathfrak{X}_k, K_{\mathfrak{X}_k/\mathfrak{X}} \otimes \big \langle J_{\text{vert}}^{k} (\mathfrak{X}) \big \rangle^{m'} \big ) \longrightarrow \mathcal{O}_\mathfrak{X} ,
\end{equation}
where $m' ,  m \gg k \gg 0$ and $\langle J_{\text{vert}}^{k} (\mathfrak{X}) \rangle^{m'}$ means the ring of operators generated by the $J_{\text{vert}}^{k} (\mathfrak{X})$ of degree $m'$. 
We note that any entire curve $f:\mathbb{C} \to X$ is satisfied by the sections  $ Q \in H^0(X_k, (\pi_k)_*\mathcal{O}_{X_k} (m)) $, i.e., $Q(f_{[k]})=0$ for $f_{[k]}:\mathbb{C} \to X_k$, a lift of $f$, cf. \cite{D1}. It follows that, in \eqref{eq:Serreduality} after the composition for $Q \in H^0 (X_k,(\pi_k)_*\mathcal{O}(X_k (m) )$ and  $P \in H^0 (X_k,K_{X_k/X} \otimes (\pi_k)_*\mathcal{O}(X_k (-m') )$, the resulting function also satisfies
\begin{equation}
\langle Q,P \rangle (f_{[k]})=0, \qquad Q=\sum_{|\alpha|=m}A_{\alpha}(z)\xi^{\alpha}, \ P= \sum_{|\beta|=m'}B_{\beta}(z)\partial_{\xi}^{\beta} .
\end{equation}
Therefore, the challenge is to show that the image of \eqref{eq:Serreduality} is a non-trivial ideal of $\mathcal{O}_X$. The above procedure plays a similar task as the idea on the existence of slanting vector fields along jet fibers, due to Siu \cite{S}. The slanting vector fields play the role of generating sections in the adjoint fiber rings of jet bundles.

\subsection{Generalization of the theorem of J. Merker for hypersurfaces}
As we mentioned in the section \ref{sec:duality} Merker \cite{M1} provided a direct proof for the Green-Griffiths conjecture on entire curve locus for a projective hypersurface of general type. In this section, we state a similar claim, as a conjecture, for a projective variety of general types, and we provide the motivations to prove it. To generalize the result of J. Merker on Green-Griffiths conjecture, we state the following more general conjecture on any generic projective variety.
\begin{conjecture} \label{con:gen-merker}
If $X \subset \mathbb{P}^{n+1}$ be a generic member of a family $\mathfrak{X}$ of projective varieties, then there are constants $c_n$ and $c_n'$ such that,
\begin{equation}
T_{J_{\text{vert}}^n(\mathfrak{X})} \otimes \mathcal{O}_{\mathfrak{X}_k}(c_n) \otimes \pi_{0k}^* L^{c_n'}
\end{equation}
is generated at every point by its global sections, where $L$ is an ample line bundle on $\mathfrak{X}$. We explain the way to understand and prove the conjecture \ref{con:gen-merker} as follows. By the analogy between microlocal differential operators and formal polynomials on the symmetric tensor algebra, it suffices to show that
\begin{equation} \label{eq:globaldual}
H^0(X_k, Sym^{\leq m'}\tilde{V}_k \otimes \mathcal{O}_{X_k}(m) \otimes \pi_{0k}^*B) \ne 0, \qquad m'>>m >>k,
\end{equation}
where $\tilde{V}_k$ is the in-homogenized $V_k$ as acting as differential operators in the first order. We also want to work over the Demailly-Semple bundle of invariant jets. 
\end{conjecture}
To prove the claim in \eqref{eq:globaldual}, notice the following. A similar procedure as the one of Demailly [\cite{D1} theorem 9.3] has to be applied to check the holomorphic Morse estimates for the following metric on the symmetric powers,
\begin{equation}
\vert (z,\xi) \vert= \Big( \sum_{s=1}^k \epsilon_s \big( \sum_{u_i \in S^sV^*} \vert W_{u_1,...,u_s}^s \vert^2 + \sum_{iju_{\alpha}u_{\beta}}C_{iju_{\alpha}u_{\beta}} z_i\bar{z}_j u_{\alpha}\bar{u}_{\beta}\big)^{p/s(s+1)} \Big)^{1/p},
\end{equation} 
where $W_{u_1,...,u_s}^s$ is the Wronskian 
\begin{equation}
W_{u_1,...,u_s}^s=W(u_1 \circ f,...,u_s \circ f),
\end{equation}
and we regard the summand front the $\epsilon_s$ as a metric on $S^sV^*$. We need to find estimates for the coefficients $C_{iju_{\alpha}u_{\beta}}$. Moreover, the frame $\langle u_i \rangle$ is chosen of monomials to be holomorphic and orthonormal at $0$ dual to the frame $\langle e^{\alpha}=\sqrt{l!/\alpha!}e_1^{\alpha_1}...e_r^{\alpha_r}\rangle$. The scaling of the basis in $S^lV^*$ is to make the frame to be orthonormal and is calculated as follows:
\begin{equation}
\langle e^{\alpha}, e^{\beta} \rangle=\langle \sqrt{l!/\alpha!}e_1^{\alpha_1}...e_r^{\alpha_r},\sqrt{l!/\beta!}e_1^{\beta_1}...e_r^{\beta_r} \rangle = \sqrt{1/\alpha! \beta!}\langle \prod_{i=1}^le_{\eta(i)}, \sum_{\sigma \in S_l} \prod_{i=1}^le_{\eta \circ \sigma(i)} \rangle ,
\end{equation}
via the embedding $S^lV^* \hookrightarrow V^{\otimes l}$ and the map $\eta:\{1....l\} \to \{1....r\}$ taking the value $i$ of the $\alpha_i$ times. 

Toward a Morse estimate with the coefficients of the curvature tensor, we proceed as follows. Because the frame $\langle e_{\lambda} \rangle$ of $V$ was chosen to be orthonormal at a given point $x \in X$, then, by substituting,
\begin{equation}
\langle e_{\lambda} , e_{\mu} \rangle=\delta_{\lambda \mu}+ \sum_{ij\lambda \mu}c_{ij\lambda \mu} z_i\bar{z}_j+...,
\end{equation} 
it follows that 
\begin{equation}
\langle e^{\alpha}, e^{\beta} \rangle= \sqrt{1/\alpha! \beta!} \left (\delta_{\alpha \beta} + \sum_{\eta \circ \sigma(i)=\eta(i)} c_{ij\alpha_{\eta(i)}\beta_{\eta(i)}}z_i\bar{z}_j+... \right ) .
\end{equation}
The strategy is to find the scalars $C_{iju_{\alpha}u_{\beta}}$ in terms of the curvature of the metric on $V$, in order to examine an estimation of the volume,
\begin{equation}
\int_{X_k} \Theta^{n+k(r-1)} = \frac{(n+k(r-1))!}{n!(k(r-1))!}\int_X \int_{\mathbb{P}(1^{[r]},...,k^{[r]})} \Theta_{\text{vert}}^{k(r-1)}\Theta_{hor}^n
\end{equation}
to be positive. However, the calculations with $\Theta$ involve more complicated estimates. Therefore, we pose the question of the existence of a positive lower bound for the global sections of the bundle in \eqref{eq:globaldual} as a step toward the Green-Griffiths conjecture. 
\begin{remark}
In \cite{DR} the existence of dual sections to $H^0(X,E_{k,m}^{GG}V^*)$ for $m \gg k \gg 0$ has been proved using the Morse inequalities, involving higher cohomologies of $X_k$ with similar coefficients, cf. loc. cit.
\end{remark}
\subsection{Differential finiteness property of fiber rings of jets moduli} \label{sec:diff-finite}
Our last result contributes to the finite generation of the fiber ring of regular jet curves. As mentioned before, the transformation group of $k$-jets is a non-reductive lie group. An open question is if the ring of invariants $\mathbb{C}[f'(0),f''(0),...,f^{(k)}(0)]^{G_k}$ is finitely generated, Where we have considered $f^{(k)}(0)$ as germ of variables. In the following, we prove this ring is differentially finitely generated; see the Appendix for the details and the basic definitions on differential fields.
\begin{theorem}
The local ring of invariant sections of $(E_{k, \leq m})_x^{G_k}$ is differentially finitely generated for each $m \in \mathbb{N}$. Furthermore, $\mathbb{C}[(J_{k,x}(X)]^{G_k}=\mathbb{C}\langle \wp_1,...,\wp_l \rangle (\alpha_1,...,\alpha_n)$, where $\wp_i$ s are polynomials on the Wronskians. The local ring of invariants $\mathbb{C}[(J_{k,x}(X)]^{U}$ under any subgroup of $U \subset SL_k$ is differentially finite generated.
\end{theorem}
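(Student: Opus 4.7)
The plan is to realize the invariant ring as an intermediate differential subfield of a Picard-type extension and then invoke the appendix. Let $A$ denote the fraction field of $\mathbb{C}[\alpha_1,\dots,\alpha_n,\xi_1,\dots,\xi_k]$ where $\alpha_i=z_i(x)$ are viewed as constants and $\xi_j=f^{(j)}(0)/j!$ are equipped with the derivation $\delta$ coming from differentiation in $t$. The jet variables are differentially algebraic over $\mathbb{C}(\alpha_1,\dots,\alpha_n)$ because the truncation to order $k$ forces polynomial differential relations on $\xi_1,\dots,\xi_k$, so Theorem 5.2 (1) shows that $A$ is differentially finitely generated over $\mathbb{C}(\alpha)$. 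A fundamental-system description of the $\xi_j$ then exhibits $A$ as a Picard extension of $\mathbb{C}(\alpha)$ whose differential Galois group lies inside $\mathrm{GL}_k(\mathbb{C})$, with the standard jet Wronskian playing the role of $W(u_1,\dots,u_n)$ from (A.2).

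The key observation is that the reparametrization group $G_k=\mathbb{C}^*\ltimes U_k$, acting on the jet vector via the upper-triangular matrix (2.5), preserves $\delta$ and therefore embeds as a closed algebraic subgroup of the differential Galois group just described. Applying Theorem 5.1 (4), the fixed ring $A^{G_k}$ is a closed intermediate differential subfield of $A$ over $\mathbb{C}(\alpha)$; applying Theorem 5.2 (2), $A^{G_k}$ is itself differentially finitely generated. Restricting to weighted degree $\leq m$ then proves the first assertion for $(E_{k,\leq m})_x^{G_k}$. The same argument applied to an arbitrary closed subgroup $U\subset SL_k$ gives the last sentence of the theorem, since the differential algebraicity argument is insensitive to which subgroup one quotients by, and the truncation to order $k$ keeps all intermediate fields finitely generated over $\mathbb{C}(\alpha)$.

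For the explicit Wronskian description, the starting point is the transformation rule recorded in the appendix: for $\sigma\in G_k$ acting on a Picard extension, $\sigma(W)=|c_{ij}|\,W$. Thus every jet Wronskian is automatically $U_k$-invariant and is a pure character for the $\mathbb{C}^*$-factor, so any homogeneous polynomial combination of matching weight of Wronskians $W_{u_1,\dots,u_s}^s$ lies in $A^{G_k}$. I would then argue, running in parallel with the bracket construction $Q_k(f)=[f',Q_{k-1}(f)]$ of Remark 2.2 (known to generate the fiber ring of the Demailly--Semple bundle), that finitely many Wronskian polynomials $\wp_1,\dots,\wp_l$ together with the base coordinates $\alpha_1,\dots,\alpha_n$ differentially generate $\mathbb{C}[J_{k,x}(X)]^{G_k}$, yielding the presentation $\mathbb{C}\langle\wp_1,\dots,\wp_l\rangle(\alpha_1,\dots,\alpha_n)$.

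The main obstacle is the verification in the second paragraph: one must check that the reparametrization action (2.5), which is a priori a geometric action on germs $(\mathbb{C},0)\to(X,x)$, genuinely coincides with a differential field automorphism of $A$ over $\mathbb{C}(\alpha)$, and that the resulting embedding identifies $G_k$ with a Zariski-closed (hence Galois-closed in the double-prime sense of the appendix) subgroup. This amounts to a careful chain-rule compatibility check for $\delta$. A secondary subtlety worth flagging is that $G_k$ is non-reductive, so the method delivers \emph{differential} finite generation but not ordinary finite generation; the theorem is stated only in the differential sense, which is exactly what the Galois-theoretic argument produces.
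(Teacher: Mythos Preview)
Your proposal is correct and follows essentially the same route as the paper: both pass to the fraction field of the fiber ring, view it as a Picard extension with algebraic differential Galois group inside $GL_k$, and then invoke Theorems~5.1 and~5.2 to conclude that the intermediate field fixed by $G_k$ (or by any $U\subset SL_k$) is differentially finitely generated. The only difference is in the justification of the explicit presentation $\mathbb{C}\langle\wp_1,\dots,\wp_l\rangle(\alpha_1,\dots,\alpha_n)$: the paper appeals to Noether normalization once differential finite generation is established, whereas you argue via the bracket construction of Remark~2.2; either route suffices.
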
 
\begin{proof}
The fiber rings of the Green-Griffiths bundles $X_k$ and sheaves $E_{k,m}^{GG}(V^*)$ are differential rings. We shall consider their quotient fields. The algebraic groups $GL_k$, $SL_k$, and $G_k$ act linearly and are differential Galois groups. We explained that the fixed field of $SL_k$ and the Galois group $1$ are the fields generated by the fibers' Wronskians and the whole quotient field, respectively. Therefore, the middle group $G_k$ has a differentially finitely generated fixed field, where we have used the criteria given in Theorems \ref{thm:diff1}, and \ref{thm:diff2}. Furthermore, one finds that a possible choice of generators may include the fixed generators of $SL_k$, i.e., the Wronskians. By the Noether normalization theorem, there exists a finite number of generators $\wp_1,...,\wp_l$ such that the ring of fibers in $J_k(X)^{G_k}$ is algebraic over $\mathbb{C}\langle \wp_1,...,\wp_l\rangle$. It follows that,
\begin{equation}
\mathbb{C}[(J_{k,x}(X)]^{G_k}=\mathbb{C}\langle \wp_1,...,\wp_l \rangle (\alpha_1,...,\alpha_n)
\end{equation} 
are differential fields; the sections $\wp_i$ are local sections of the $J_k(X)$ and, as was explained, can be taken polynomials on Wronskians.
\end{proof}
\section{Conclusion}\label{sec:conc}
An invariant version of the main theorem of Demailly \cite{D1} on the existence of global sections of the twisted  $k$-jet bundle is proved using the Morse estimates on the curvature of an invariant metric on the Demailly-Semple bundle. Furthermore, we present a specific Serre duality for a bundle of $k$-jets on a projective variety. The proof is based on the existence of the relative pluri-canonical sheaf on jet fibers. We also provide an algebraic proof that the fiber ring on the regular locus of moduli of $k$-jets is differentially finitely generated.
\section{Appendix}
This appendix reviews basic definitions and facts about the differential rings and their differential Galois theory. The material is used in Section \ref{sec:diff-finite} of the text.

A differential field is a field $A$ with a derivation $\delta:A \to A, \ \delta(ab)=b\delta(a) + a \delta(b)$. Let $A$ be a differential field and $B$ a differential subfield. The differential Galois group, $G$ of $A/B$, is the group of all differential automorphisms of $A$ living $B$ fixed. Then, the same formalism as the Galois groups of usual fields appears also here. 

For any intermediate differential subfield $C$, we denote the subgroup of $G$ living $C$ elementwise fixed by $C'$; and similar for any subgroup $H$ of $G$ denoted by $H'$, the elements in $A$ fixed by that. Call a field or group closed if it equals its double prime. With these notations, PRIMED defines the Galois correspondence between closed subgroups and closed differential subfields [see \cite{K} for notations].

The Wronskian of $n$ elements $y_1,...,y_n$ in a differential ring is defined as the determinant,
\begin{equation}
W(y_1,...,y_n)=\left|
\begin{array}{cccc}
y_1 & y_2 & ... & y_n \\
y_1' & y_2' & ... &y_n' \\
   &     &     &    \\
y_1^{(n)} &     &   &y_n^{(n-1)} 
\end{array} 
\right|.   
\end{equation}

It is well known that $n$ elements in a differential field are linearly dependent over the field of constants if and only if their Wronskian vanishes \cite{K}. We recall that an extension of the form $A=K\langle u_1,...,u_n\rangle$, where $u_1,...,u_n$ are solutions of the differential equation 
\begin{equation}
L(y)=\frac{W(y,u_1,...,u_n)}{W(u_1,...,u_n)}=y^{(n)}+a_1y^{(n-1)}+...+a_ny=0
\end{equation}
is said a Picard extension, cf. \cite{K}.
\begin{theorem} \cite{K} \label{thm:diff1}
We have the following:
\begin{itemize}
\item[(1)] Let $K \subset L \subset M$ be differential fields. Suppose that $L$ is Picard over $K$ and $M$ has the same field of constants as $K$. Then, any differential automorphism of $M$ over $K$ sends $L$ into itself.
\item[(2)] The differential Galois group of a Picard extension is an algebraic matrix group over the field of constants. 
\item[(3)] If $K$ has an algebraically closed constant field of characteristic $0$, and $M$ is a Picard extension of $K$, then, any differential isomorphism over $K$ between two intermediate fields extends to the whole $M$. In particular, this also holds for any differential automorphism of an intermediate field over $K$. 
\item[(4)] Galois theory implements a one-to-one correspondence between the intermediate differential fields and the algebraic subgroups of the differential Galois group $G$. Thus, for example, a closed subgroup $H$ is normal iff the corresponding field $L/K$ is normal, then $G/H$ is the full differential Galois group of $L$ over $K$.
\end{itemize} 
\end{theorem}

In fact, over a constant field of $char=0$ any differential isomorphism between intermediate fields extends to the whole differential field.
Let $A=K\langle u_1,...,u_n\rangle$ be a Picard extension, and $W$ the Wronskian of $u_1,...,u_n$. A basic fact about the Wronskians is that for a differential automorphism $\sigma$ of $A$ we have $\sigma(W)=|c_{ij}|W$. Therefore, $W$ is fixed by $\sigma$ if and only if $ |c_{ij}| =1$. 

A family of elements $(x_i)_{i \in I}$ is called differential-algebraic independent if the family $(x_i^{(j)})_{i \in I,j \geq 0}$ is algebraically independent over the field of constants, otherwise we call them dependent. An element $x$ is called differentially algebraic if the family, consisting of $x$ only, is differential algebraic dependent. An extension is called differential algebraic if any element of it, is so. Finally, we say $G$ is differentially finite generated over $F$ if there exists elements $x_1,...,x_n \in G$ such that $G$ is generated over $F$ by the family $(x_i^{(j)})_{1 \leq i \leq n,j \geq 0}$, cf. \cite{K}.
\begin{theorem} \cite{K} \label{thm:diff2}
Let $F \subset G$ be an extension of differential fields, then
\begin{itemize}
\item If $G=F\langle x_1,...,x_n\rangle$ and each $x_i$ is differential algebraic over $F$, then, $G$ is finitely generated over $F$.
\item If $G$ is differential finite generated over $F$, and $F \subset E \subset G$ is an intermediate differential field, then, the $E$ is also differentially finite generated.
\end{itemize}
\end{theorem}

%\bibliographystyle{abbrv}
%\bibliography{ref}

\bibliographystyle{amsplain}
%    Insert the bibliography data here.

\end{document}